\newtheorem{theorem}{Theorem}[section]
\newtheorem{prop}[theorem]{Proposition}
\newtheorem{coro}[theorem]{Corollary}
\newtheorem{lemma}[theorem]{Lemma}
\newtheorem*{maintheorem1}{Proposition \ref{maintheorem}}
\newtheorem*{maintheorem2}{Theorem \ref{losurgeryinlensspace}}
\newtheorem*{corollary4.6}{Corollary \ref{hyperboliccase}}
\theoremstyle{definition}
\newtheorem{defin}[theorem]{Definition}
\newtheorem{rem}[theorem]{Remark}
\begin{document}
\title[]{Integral left-orderable surgeries on genus one fibered knots}
\author{Kazuhiro ICHIHARA \and Yasuharu NAKAE}
\address{Department of Mathematics, College of Humanities and Sciences, Nihon University, 3-25-40 Sakurajosui, Setagaya-ku, Tokyo 156-8550, Japan.}
\email{ichihara.kazuhiro@nihon-u.ac.jp}
\address{Graduate School of Engineering Science, Akita University,
1-1 Tegata Gakuen-machi, Akita city, Akita, 010-8502, Japan.}
\email{nakae@math.akita-u.ac.jp}
\thanks{
\textit{Keyword and phrases:} left-orderable group, Anosov flow, L-space conjecture.
}
\subjclass[2010]{Primary 57M50; Secondary 57R30, 20F60, 06F15}

\begin{abstract}
Following the classification of genus one fibered knots in lens spaces by Baker,
we determine hyperbolic genus one fibered knots in lens spaces
on whose all integral Dehn surgeries yield closed 3-manifolds with left-orderable fundamental groups.
\end{abstract}

\maketitle

\section{Introduction}
In this paper,
we study the left-orderability of the fundamental group of a closed manifold
which is obtained by Dehn surgery on a genus one fibered knot in a lens space.
In \cite{BGW},
Boyer, Gordon, and Watson formulated a conjecture
which states that an irreducible rational homology sphere is an L-space
if and only if its fundamental group is not left-orderable.
Here, a rational homology 3-sphere $Y$ is called an \textit{L-space}
if $\mathrm{rank}\;\widehat{HF}(Y)=|H_1(Y;\mathbb{Z})|$ holds,
and a non-trivial group $G$ is called \textit{left-orderable}
if there is a total ordering on the elements of $G$
which is invariant under left multiplication.
This famous conjecture proposes a topological characterization of an L-space
without referring to Heegaard Floer homology.
Thus it is interesting to determine which 3-manifold has a left-orderable fundamental group.

One of the methods yielding many L-spaces is Dehn surgery on a knot.
It is the operation to create a new closed 3-manifold
which is done by removing an open tubular neighborhood of the knot
and gluing back a solid torus via a boundary homeomorphism.
Thus it is natural to ask
when 3-manifolds obtained by Dehn surgeries have non-left-orderable fundamental groups.
In \cite{RS},
Roberts and Shareshian give a condition that
the manifold obtained by Dehn surgery on a genus one fibered knot
has the non-left-orderable fundamental group.
Here, a knot in a closed 3-manifold is called a \textit{genus one fibered knot}
if its exterior is a punctured torus bundle over the circle
and the knot is parallel to the boundary of a fiber.
In contrast with this work,
we study manifolds obtained by Dehn surgery on a genus one fibered knot
with the left-orderable fundamental group.
For a genus one fibered knot with the orientation preserving hyperbolic monodromy,
we can obtain the following by applying a theorem of Fenley~\cite{Fen94}.

\begin{maintheorem1}
Let $M$ be a closed, orientable 3-manifold and
$K$ be a genus one fibered knot in $M$.
If the monodromy matrix $\phi_\sharp$ of $K$ satisfies
$\mathrm{Trace} (\phi_\sharp)>2$
then the fundamental group $\pi_1 (\Sigma_K(n))$ is left-orderable for any integer $n$,
where $\Sigma_K(n)$ is the closed manifold obtained by Dehn surgery on $K$ with slope $n$. \end{maintheorem1}

This fact is originally introduced in \cite[p.1217]{BRW} as a comment of Roberts
for the figure-eight knot in $S^3$,
and also mentioned in \cite[p.709]{RS}.
We will explain the background of the fact and formulate a proof of the fact in Section 3.

Some equivalent classes of genus one fibered knots in lens spaces are studied by Morimoto~\cite{Mo89},
and all such classes are completely classified by Baker~\cite{Ba2014}.
Following the classification,
by applying Proposition~\ref{maintheorem},
we determine the genus one fibered knots in lens spaces
on whose all integral Dehn surgeries
yield closed 3-manifolds
with left-orderable fundamental groups.

\begin{maintheorem2}
Let $K$ be a genus one fibered knot in a lens space $L(\alpha, \beta)$.
If the monodromy matrix $\phi_\sharp$ of $K$ is conjugate to one of the following types
{\rm (1)} and {\rm (2)}
in $GL_2(\mathbb{Z})$,
then for any $n\in\mathbb{Z}$,
the fundamental group $\pi_1(\Sigma_K(n))$ of the closed manifold $\Sigma_K(n)$ obtained by $n$-surgery
on $K$
is left-orderable.
\begin{enumerate}
\item $L(\alpha,\beta)=L(\alpha,1)$ and $\phi_\sharp=\begin{pmatrix} 1+\alpha & \alpha \\ 1 & 1 \end{pmatrix}$ for $\alpha>0$, or
\item $L(\alpha,\beta)$ satisfies $\alpha=2pq+p+q+1$, $\beta=2q+1$ and \\ $\phi_\sharp=\begin{pmatrix} 2pq+p-q & 2pq+3p-q-1 \\ 2q+1 & 2q+3 \end{pmatrix}$ for integers $p,q>0$.
\end{enumerate}
\end{maintheorem2}
 
As a corollary, we have the following by considering the cases for hyperbolic genus one fibered knots in lens spaces.

\begin{corollary4.6}
Let $K$ be a hyperbolic, genus one fibered knot in lens space $L(\alpha, \beta)$.
The fundamental groups of the closed manifolds obtained by all integral surgeries on K are left-orderable
if and only if the monodromy matrix $\phi_\sharp$ of $K$
is conjugate to one of the types {\rm (1)} and {\rm (2)}
of Theorem~\ref{losurgeryinlensspace} in $GL_2(\mathbb{Z})$.
\end{corollary4.6}

There are many works for the left-orderability of the fundamental group of 3-manifolds which are obtained by
Dehn surgeries on knots in $S^3$.
For instance,
let $K$ be the figure-eight knot in $S^3$,
then
$\pi_1(\Sigma_K(r))$ is left-orderable for $-4\leq r \leq 4$~\cite[for $-4<r<4$]{BGW}, \cite[for $r=\pm 4$]{CLW}.
These results are achieved by using representations from the fundamental group to $SL_2(\mathbb{R})$.
In this direction,
there are some works,
by Hakamata and Teragaito for hyperbolic genus one 2-bridge knots~\cite{HT2014},
Clay and Teragaito for 2-bridge knots~\cite{CT2013},
and 
Teragaito~\cite{Tera2013} and Tran~\cite{Tran2015} for twist knots.
By using a criterion of Culler and Dunfield~\cite{CD2018},
Nie showed that for $(-2,3,2s+1)$-pretzel knot $K$ ($s\ge 3$),
there exists $\varepsilon>0$ such that $\pi_1(\Sigma_K(r))$ is left-orderable for any rational number
$r\in (-\varepsilon, \varepsilon)$~\cite{Nie2019},
and Tran showed that for some twisted torus knots $K$,
$\pi_1(\Sigma_K(r))$ is left-orderable for any $r\in\mathbb{Q}$ sufficiently close to $0$~\cite{Tran2019}.

In contrast with these methods to prove the left-orderability,
we use the existence of $\mathbb{R}$-covered Anosov flow
as follows.
A countable group $G$
is left-orderable if there exists a faithful action of $G$ on the real line $\mathbb{R}$ (see Section 2).
If a closed 3-manifold $M$ contains a transversely orientable $\mathbb{R}$-covered foliation $\mathcal{F}$,
the fundamental group $\pi_1(M)$ acts faithfully to the leaf space which is homeomorphic to $\mathbb{R}$,
so $\pi_1(M)$ is left-orderable.
A closed 3-manifold equipped with a suspension Anosov flow
is naturally endowed with $\mathbb{R}$-covered foliations
which are the stable and unstable foliations of the flow.
An Anosov flow is said to be $\mathbb{R}$-covered
if both the stable and unstable foliations of the flow are $\mathbb{R}$-covered.
In \cite{Fen94}, Fenley showed that
closed manifolds obtained by Dehn surgeries on a closed orbit of a suspension Anosov flow
with integral slopes
contain $\mathbb{R}$-covered Anosov flows.
Then we can see the fundamental group of the resultant closed 3-manifold is left-orderable.
We think this method will open up a new study for the existence of left-orderable fundamental groups.

\begin{rem}
The manifolds admitting a genus one open book decomposition
with connected binding
are obtained by certain Dehn surgeries on genus one fibered knots.
In \cite{Bal},
for manifolds admitting a genus one open book decomposition
with connected binding,
Baldwin completely determines which these manifolds are L-space,
and in \cite{LW},
Li and Watson show that the fundamental groups of these manifolds which are L-spaces
are not left-orderable.
In \cite{BH},
Boyer and Hu show that the L-space conjecture is true for irreducible 3-manifolds
which admit a genus one open book decomposition with connected binding
by combining Theorem 1.9 in \cite{BH} with these results of Baldwin, and Li and Watson.
\end{rem}

This paper is organized as follows.
In Section 2, we review definitions and some results about $\mathbb{R}$-covered foliations, left-orderable groups, 
once punctured torus bundles, and genus one fibered knots.
In Section 3, we briefly review results about Anosov flow and Dehn surgery on its closed orbits.
After mentioning a theorem of Fenley \cite{Fen94}, we prove Proposition~\ref{maintheorem}.
In Section 4,
we describe first a theorem of Baker \cite{Ba2014},
then we prove Theorem~\ref{losurgeryinlensspace}.

\section{Preliminaries}

We shall review some definitions and results about foliations, left-orderable groups, and once punctured torus bundles in this section.

\subsection{$\mathbb{R}$-covered foliations and left-orderable groups}

For definitions and fundamental results on foliation theory, see \cite{CC1}.

A codimension one foliation on a 3-manifold is \textit{transversely oriented} (or \textit{co-oriented})
if all leaves are coherently oriented.
A solid torus
is called a \textit{Reeb component}
if it is equipped with a codimension one foliation whose
leaves are all homeomorphic to a plane except for the torus boundary leaf.
A codimension one, transversely oriented foliation $\mathcal{F}$ on a closed 3-manifold
called a \textit{Reebless foliation} if $\mathcal{F}$ does not contain Reeb components as a leaf-saturated set.
By the theorems of Novikov\,\cite{No}, Rosenberg\,\cite{Ro}, and Palmeira\,\cite{Pa},
if a closed 3-manifold $M$ is not homeomorphic to $S^2\times S^1$ and contains a Reebless foliation,
then 
the fundamental group $\pi_1(M)$ is infinite,
the universal cover $\widetilde{M}$ is homeomorphic to $\mathbb{R}^3$,
and all leaves of its lifted foliation $\widetilde{\mathcal{F}}$ on $\widetilde{M}$ are homeomorphic to a plane
(see also \cite[Chapter 9]{CC2}).

If $\mathcal{F}$ is a Reebless foliation on a closed 3-manifold $M$,
the quotient space $\mathcal{T}=\widetilde{M}/\widetilde{\mathcal{F}}$
is called the \textit{leaf space} of $\mathcal{F}$.
In general $\mathcal{T}$ is a simply connected 1-dimensional manifold,
but it might be non-Hausdorff space \cite{HR} (see also \cite[\textit{Appendix D}]{CC2}).
Since the fundamental group $\pi_1(M)$ acts on the universal cover $\widetilde{M}$ as deck transformations,
the action descends to a nontrivial action of $\pi_1(M)$ on the leaf space $\mathcal{T}$
as homeomorphisms.
We call a Reebless foliation $\mathcal{F}$ is \textit{$\mathbb{R}$-covered}
if the leaf space $\mathcal{T}$ of $\mathcal{F}$ is homeomorphic to $\mathbb{R}$.
When an $\mathbb{R}$-covered foliation $\mathcal{F}$ of $M$ is transversely oriented,
the fundamental group $\pi_1(M)$ acts on the leaf space $\mathcal{T}$ as orientation preserving homeomorphisms.
Therefore we can obtain the following.

\begin{prop}\label{rcovfoliaction}
Let $\mathcal{F}$ be a transversely oriented Reebless foliation
on a closed orientable 3-manifold $M$.
If $\mathcal{F}$ is $\mathbb{R}$-covered,
there is a faithful action of $\pi_1(M)$ on the real line $\mathbb{R}$
as orientation preserving homeomorphisms.
\end{prop}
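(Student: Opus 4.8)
The plan is to build the required action in two steps: first produce an action of $\pi_1(M)$ on $\mathbb{R}$ by orientation preserving homeomorphisms whose kernel can be controlled, and then upgrade it to a faithful one by invoking the criterion recalled in Section~2 that every countable left-orderable group admits a faithful action on $\mathbb{R}$ by orientation preserving homeomorphisms. If $M$ is homeomorphic to $S^2\times S^1$ there is nothing to do, since $\pi_1(M)\cong\mathbb{Z}$ acts faithfully on $\mathbb{R}$ by translations; so from now on I assume $M\not\cong S^2\times S^1$.

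First I would assemble the structural input. Since $M\not\cong S^2\times S^1$ carries the Reebless foliation $\mathcal{F}$, the theorems of Novikov, Rosenberg and Palmeira give $\widetilde{M}\cong\mathbb{R}^3$ with every leaf of the lifted foliation $\widetilde{\mathcal{F}}$ a plane. As $\mathcal{F}$ is $\mathbb{R}$-covered, the quotient map $p\colon\widetilde{M}\to\mathcal{T}=\widetilde{M}/\widetilde{\mathcal{F}}\cong\mathbb{R}$ is continuous onto a Hausdorff space, so every leaf of $\widetilde{\mathcal{F}}$, being a fiber of $p$, is closed in $\widetilde{M}$. The deck action of $\pi_1(M)$ on $\widetilde{M}$ permutes the leaves of $\widetilde{\mathcal{F}}$ and hence factors through $p$ to an action $\rho\colon\pi_1(M)\to\mathrm{Homeo}(\mathcal{T})=\mathrm{Homeo}(\mathbb{R})$; by transverse orientability of $\mathcal{F}$ and orientability of $M$ the co-orientation of $\widetilde{\mathcal{F}}$ is deck invariant, so, exactly as noted just before the statement, $\rho$ lands in $\mathrm{Homeo}^+(\mathbb{R})$. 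The image $Q=\pi_1(M)/\ker\rho$ is then a countable subgroup of $\mathrm{Homeo}^+(\mathbb{R})$, hence left-orderable.

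The crux is to show $N=\ker\rho$ is left-orderable. Its elements are deck transformations preserving every leaf of $\widetilde{\mathcal{F}}$; fixing one leaf $L\cong\mathbb{R}^2$, the group $N$ acts on $L$, and this action is faithful because a nontrivial deck transformation has no fixed point in $\widetilde{M}$. Since $L$ is closed in $\widetilde{M}$ and $N$-invariant and $\pi_1(M)$ acts freely and properly discontinuously on $\widetilde{M}$, the induced action of $N$ on $L$ is free and properly discontinuous; hence $L\to N\backslash L$ is a covering with simply connected total space, $N\backslash L$ is a surface, and $N\cong\pi_1(N\backslash L)$. This surface is orientable because $N$ preserves the orientation of $L$ determined by the ambient and transverse orientations, and the fundamental group of an orientable surface is left-orderable; therefore $N$ is left-orderable.

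Finally, left-orderability passes to group extensions, so from $1\to N\to\pi_1(M)\to Q\to1$ with $N$ and $Q$ left-orderable I conclude $\pi_1(M)$ is left-orderable, and, being countable, it acts faithfully on $\mathbb{R}$ by orientation preserving homeomorphisms. I expect the genuinely delicate point to be the identification of $\ker\rho$ as a surface group: this is precisely where $\mathbb{R}$-coveredness rather than mere Reeblessness is used, since it is the closedness of the leaves of $\widetilde{\mathcal{F}}$ in $\widetilde{M}$ that makes the restricted $N$-action on a single leaf properly discontinuous, and hence its quotient an honest surface.
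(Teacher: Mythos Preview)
Your argument is correct and, in fact, more careful than what the paper offers. The paper does not give a separate proof of this proposition; it is presented as a consequence of the paragraph immediately preceding it, which only shows that the deck action descends to a \emph{nontrivial} orientation-preserving action on the leaf space $\mathcal{T}\cong\mathbb{R}$. Faithfulness is asserted but not justified there, and indeed the leaf space action is not faithful in general: for the product foliation on $\Sigma\times S^1$ with $\Sigma$ a closed orientable surface of genus at least one, the whole subgroup $\pi_1(\Sigma)$ lies in the kernel.

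Your route genuinely differs from the paper's sketch in that you do not try to argue that $\rho$ itself is faithful. Instead you identify $N=\ker\rho$ with the fundamental group of an orientable surface via its free, properly discontinuous action on a single planar leaf, use that surface groups and extensions of left-orderable groups by left-orderable groups are left-orderable, and only then invoke the dynamical realisation of a countable left-orderable group to produce a faithful action on $\mathbb{R}$. This detour is exactly what is needed to turn the paper's ``nontrivial'' into the claimed ``faithful''; the payoff is a proof that works uniformly, including in the product-like situations where the naive leaf-space action has large kernel. One small remark: proper discontinuity of the $N$-action on $L$ already follows from restricting the properly discontinuous deck action to an invariant subspace, so closedness of $L$ in $\widetilde{M}$, while true, is not the essential point there.
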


A group $G$ is \textit{left-orderable}
if there is a total ordering $<$ on every elements of $G$ which is invariant under left multiplication,
that is, for any $f,g,h \in G$, $g<h$ implies $fg<fh$.
It is known that
a non-trivial countable group $G$ is left-orderable
if and only if
there exists a faithful action of $G$ on the real line $\mathbb{R}$
(cf. \cite[Proposition 5.1]{BC}).
Then we can obtain the following by combining this observation with Proposition~\ref{rcovfoliaction}.

\begin{prop}\label{rcovleftorderable}
If a closed orientable 3-manifold $M$ admits a transversely oriented $\mathbb{R}$-covered foliation,
then its fundamental group $\pi_1(M)$ is left-orderable.
\end{prop}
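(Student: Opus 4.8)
The plan is to deduce this immediately from Proposition~\ref{rcovfoliaction} together with the characterization, cited above from \cite[Proposition 5.1]{BC}, that a nontrivial countable group is left-orderable precisely when it admits a faithful action on $\mathbb{R}$. So the real content is just to check that the hypotheses of that characterization apply to $\pi_1(M)$.

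First I would record the two easy points. Since $M$ is a closed $3$-manifold, $\pi_1(M)$ is finitely presented, hence countable. Next, $\pi_1(M)$ is nontrivial: a transversely oriented $\mathbb{R}$-covered foliation is by definition Reebless, so by the theorems of Novikov, Rosenberg, and Palmeira recalled in the previous subsection the universal cover $\widetilde{M}$ is homeomorphic to $\mathbb{R}^3$ when $M\not\cong S^2\times S^1$, which forces $\pi_1(M)$ to be infinite; and if $M\cong S^2\times S^1$ then $\pi_1(M)\cong\mathbb{Z}$ is again infinite. (Alternatively, a simply connected closed $3$-manifold is $S^3$ by the Poincar\'e conjecture, and $S^3$ admits no Reebless foliation by Novikov's theorem, so the hypothesis already rules this case out.)

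Then I would apply Proposition~\ref{rcovfoliaction}: since $\mathcal{F}$ is transversely oriented, Reebless, and $\mathbb{R}$-covered, the deck action of $\pi_1(M)$ on $\widetilde{M}$ descends to a faithful action of $\pi_1(M)$ on the leaf space $\mathcal{T}$, which is homeomorphic to $\mathbb{R}$, by orientation-preserving homeomorphisms. Feeding this faithful action of the nontrivial countable group $\pi_1(M)$ on $\mathbb{R}$ into \cite[Proposition 5.1]{BC} yields that $\pi_1(M)$ is left-orderable, as claimed.

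No step here is a genuine obstacle: the substantive work has already been carried out in Proposition~\ref{rcovfoliaction}, and the only point needing a little care is confirming that $\pi_1(M)$ is nontrivial (so that ``left-orderable'' is not vacuous) and countable, so that the group-theoretic criterion of \cite{BC} is legitimately invoked.
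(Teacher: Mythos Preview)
Your argument is correct and matches the paper's own approach: the proposition is stated there as an immediate consequence of Proposition~\ref{rcovfoliaction} combined with the cited criterion \cite[Proposition 5.1]{BC}. Your added verification that $\pi_1(M)$ is countable and nontrivial is a reasonable elaboration of details the paper leaves implicit.
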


\subsection{Genus one fibered knots}

We will briefly review the properties of once punctured torus bundles following the description of \cite{RS}.
Let $T$ be a compact, genus one surface with one boundary component,
and $\phi:T\to T$ be a homeomorphism on $T$ fixing the boundary $\partial T$ pointwise.
The punctured torus bundle $M_\phi$ with monodromy $\phi$ is defined by the following:
$$M_\phi=(T\times [0,1])/(x,1)\sim(\phi(x),0).$$

We can identify the induced automorphism $\phi_\sharp:H_1(T)\to H_1(T)$
with a $2\times2$ non-singular matrix
since $H_1(T)\cong \mathbb{Z}\oplus\mathbb{Z}$.
If $\phi$ is an orientation preserving homeomorphism,
we can see $\phi_\sharp \in SL_2(\mathbb{Z})$
and $M_\phi$ is orientable.
Two punctured torus bundles
$M_\phi$ and $M_\psi$ are homeomorphic
if and only if
$\phi_\sharp$ and $\psi_\sharp$ are in the same conjugacy class of $GL_2(\mathbb{Z})$
(see \cite[Proposition 1.3.1]{CJR}).

We fix a meridian-longitude pair on $\partial M_\phi$ as follows
(for general definitions and results of Dehn filling and Dehn surgery, see \cite{Rol}).
We define the longitude $\ell$ as the simple closed curve $\partial T\times \{0\}$ on $\partial M_\phi$ .
For a point $x$ in $\partial T\times \{0\}$ the simple closed curve $m=\{x\}\times [0,1]$ intersects $\ell$ at only $x$,
then we can recognize $m$ as a meridian.
The isotopy class $\gamma$ of each non-trivial unoriented simple closed curve in the torus $\partial M_\phi$ 
is called \textit{slope}, and a representative $c$ in the isotopy class is called a simple closed curve of slope $\gamma$. 
Using the meridian-longitude pair,
for any slope $\gamma$ on $\partial M_\phi$,
an oriented simple closed curve $c$ of slope $\gamma$
is represented by $[c]=p[m]+q[l]\in H_1(\partial M_\phi)$, $p,q\in\mathbb{Z}$ and $p$ and $q$ are relatively prime.
So we identify the slope $\gamma$ by the rational number $p/q$. 

In the following way,
a closed 3-manifold is constructed from $M_\phi$,
which we denote by
$M_\phi(p/q)$.
Let $V=D^2\times S^1$ be the solid torus and $f:\partial V\to \partial M_\phi$ be a homeomorphism
which satisfies $f(\partial D^2\times \{t\})=c$ for a fixed $t\in S^1$ and a simple closed curve $c$
on $\partial M_\phi$ of slope $p/q$.
Then we define
$$M_\phi(p/q)=(M_\phi\cup V)/(x\sim f(x)).$$
In fact,
the homeomorphism type of $M_\phi(p/q)$ does not depend on the choice of $t$ or $f$,
only depends on the monodromy $\phi$ and the slope $p/q$.

We denote $M_\phi(1/0)$ by $\bar{M}_\phi$ hereafter.
A knot $K$ in a closed 3-manifold $M$ is called a \textit{genus one fibered knot} (for short, a \textit{GOF-knot})
if $M\setminus \mathrm{int} N(K)$ is homeomorphic to $M_\phi$ for some orientation preserving homeomorphism $\phi$,
$K$ is parallel to the boundary $\partial T$ of a fiber $T$,
and the monodromy $\phi$ is the identity on the boundary of the fiber. 
We can regard 
a closed 3-manifold including a GOF-knot $K$ as $\bar{M}_\phi$ for some $\phi$,
and $K$ is the core curve in $V$.

The closed 3-manifold $\bar{M}_\phi$ is identified with the double branched cover of $S^3$
over a closed 3-braid as follows (see also \cite[Section 4 and 5]{MR}, \cite[Section 2]{Bal}).

We take the once punctured torus $T$ as shown in Figure~\ref{oncepuncturedtorus}.
\begin{figure}[h]
	\centerline{\includegraphics[keepaspectratio]{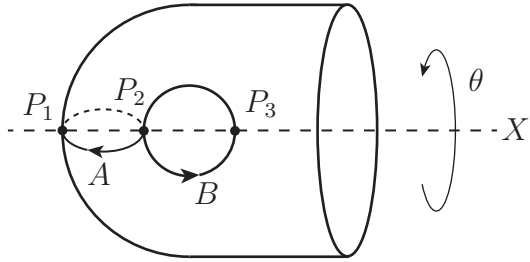}}
\caption{once punctured torus $T$ with curves $A$ and $B$}
\label{oncepuncturedtorus}
\end{figure}
Let $\theta : T\to T$ be the involution of $T$ obtained by the $180$ degrees rotation about the axis $X$,
where $P_1$, $P_2$ and $P_3$ are the intersection points of $T$ and $X$.
Then we obtain the projection map $p:T\to T/\theta= D^2$ by taking the quotient of $\theta$
which is the double branched cover of the $2$-disk $D^2$
with the three branched points $P_1$, $P_2$ and $P_3$,
here we denote the image $p(P_i)$ by the same symbol $P_i$ on $D^2$. 
Let $\phi$ be a homeomorphism $\phi:T\to T$ which is the identity on the boundary $\partial T$.
Since $\phi$ is isotopic to a composition of Dehn twists along the curves $A$ and $B$
in Figure~\ref{oncepuncturedtorus},
we can suppose that $\phi$ maps $\{P_1, P_2, P_3\}$ into itself.
The involution $\theta$ commutes $\phi$,
so $\theta$ extends on each fiber of $M_\phi$.
Therefore we obtain the double branched covering $\bar{p}$ of the solid torus $\bar{V}$
$$\bar{p}: M_\phi=T\times [0,1]/\phi \to D^2\times [0,1]/\phi_\theta =\bar{V}$$
with the extended monodromy $\phi_\theta$ on $D^2$.
The branch set is $L=\{P_1, P_2, P_3\}\times [0,1] \subset \bar{V}$. 
The set $L$ becomes a closed $3$-braid in the solid torus $\bar{V}$
because $\phi_\theta$ fixes the boundary $\partial D^2$ and
alternates $\{ P_1, P_2, P_3 \}$ each other.
We also extend the branched covering $\bar{p}$ to $\bar{M}_\phi$,
$$\hat{p}: \bar{M}_\phi=M_\phi\cup V \to \bar{V} \cup \bar{V}' \simeq S^3$$
where the attached solid torus $V$ is the double cover of the solid torus $\bar{V}'$.
Since $\bar{V}\cup\bar{V}'$ is homeomorphic to $S^3$,
we can obtain the double branched cover $\hat{p}$ of $S^3$
with the branched set $L$.

Let $\alpha$ and $\beta$ be the mapping classes
generated by Dehn twists along the curves $A$ and $B$ in Figure~\ref{oncepuncturedtorus}
respectively.
Each of classes corresponds to the matrix
of the induced automorphism on $H_1(T)$
in $SL_2(\mathbb{Z})$ as follows:
$$
\alpha \longleftrightarrow
\phi_A=\left(\begin{array}{cc}
1 & 1 \\
0 & 1
\end{array}\right)
\hspace{15pt}
\beta \longleftrightarrow
\phi_B=\left(\begin{array}{cc}
1 & 0 \\
1 & 1
\end{array}\right).
$$
And also each of classes corresponds to the element of the $3$-braid group
$B_3=\langle \sigma_1, \sigma_2 \rangle$ as follows:
$$\alpha \longleftrightarrow \sigma_1
\hspace{15pt}
\beta \longleftrightarrow \sigma_2^{-1},
$$
where $\sigma_1$ interchanges $P_1$ with $P_2$ counterclockwise on $D^2$
and also $\sigma_2^{-1}$ interchanges $P_2$ with $P_3$ clockwise from the upper view point.

\begin{figure}[h]
	\centerline{\includegraphics[keepaspectratio]{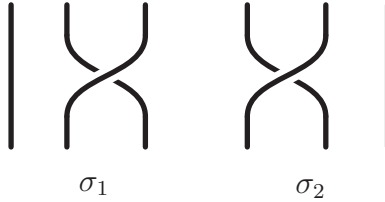}}
\caption{the generators $\sigma_1$ and $\sigma_2$ of the 3-braid group $B_3$}
\label{braid}
\end{figure}

The monodromy matrix $\phi_\sharp$ of a monodromy $\phi:T\to T$ is the composition of the sequence of $\phi_A$ or $\phi_B$ if
$\phi$ is an orientation preserving homeomorphism.
Therefore we can associate $\phi_\sharp$ to the element of $3$-braid group $B_3$
by the above observation as follows.

\begin{prop}\label{monodromybraid}
Let $K$ be a genus one fibered knot in a closed 3-manifold $M$ with the monodromy $\phi$.
$M$ is the double branched cover of $S^3$ over the closed $3$-braid $L$,
and we can regard $K$ with the lift of the braid axis of $L$
which is represented by the 3-braid $\sigma\in B_3$.
We suppose $\sigma$ is the product of generators $\sigma_1$ and $\sigma_2$ of $B_3$ as follows:
$$\sigma=\omega_1^{\varepsilon_1}\omega_2^{\varepsilon_2}\cdots\omega_n^{\varepsilon_n},\;\;\omega_i\in\{\sigma_1, \sigma_2\},\;\varepsilon_i\in\mathbb{Z}\setminus\{0\}.$$
Then the monodromy matrix $\phi_\sharp$ is obtained by the product of matrices $\phi_A$ and $\phi_B$
as follows:
$$\phi_\sharp=\phi_1^{\varepsilon_1}\phi_2^{\varepsilon_2}\cdots\phi_n^{\varepsilon_n},\;\;
\phi_i=
\left\{
\begin{array}{ll}
\phi_A & \text{if}\;\; \omega_i=\sigma_1 \\[3pt]
\phi_B^{-1} & \text{if}\;\; \omega_i=\sigma_2
\end{array}
\right.$$
\end{prop}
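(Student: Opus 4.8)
The plan is to deduce the formula from two homomorphisms out of the mapping class group $\mathcal{M}=\mathrm{MCG}(T,\partial T)$ of the once punctured torus with boundary fixed pointwise, both already implicit in the discussion above. First I would record that $\mathcal{M}$ is generated by the Dehn twists $\alpha$ and $\beta$ along the curves $A$ and $B$ of Figure~\ref{oncepuncturedtorus}, so that the monodromy $\phi$ of $K$ is a word in $\alpha^{\pm1}$ and $\beta^{\pm1}$. Next, the action on $H_1(T)\cong\mathbb{Z}^2$ gives a homomorphism $\Phi\colon\mathcal{M}\to SL_2(\mathbb{Z})$ with $\Phi(\alpha)=\phi_A$, $\Phi(\beta)=\phi_B$, and $\Phi(\phi)=\phi_\sharp$ by definition of $\phi_\sharp$; and, since $\theta$ commutes with every mapping class up to isotopy and $\theta$-equivariant isotopies descend to isotopies of $D^2$ rel $\{P_1,P_2,P_3\}$, passing to the quotient $D^2=T/\theta$ gives a homomorphism $\Theta\colon\mathcal{M}\to\mathrm{MCG}(D^2,\{P_1,P_2,P_3\})=B_3$ with $\Theta(\alpha)=\sigma_1$ and $\Theta(\beta)=\sigma_2^{-1}$, which is moreover an isomorphism (this is exactly the double branched cover correspondence underlying the map $\hat p$).

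The second step is to identify the braid $\sigma$ occurring in the statement with $\Theta(\phi)$. Unwinding the construction of $L$: cutting the solid torus $\bar V=D^2\times[0,1]/\phi_\theta$ along the disk $D^2\times\{0\}$ exhibits the branch set $L$ as the suspension of the three points $\{P_1,P_2,P_3\}$ under $\phi_\theta$, i.e.\ $L$ is the closure of the $3$-braid $\phi_\theta=\Theta(\phi)$, with braid axis the core of $\bar V'$. Since $V\to\bar V'$ is the unbranched double cover appearing in $\hat p$, the $\hat p$-preimage of this axis is the core of $V$, namely $K$; so the braid representing $L$, whose axis lifts to $K$, is precisely $\sigma=\Theta(\phi)$.

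Finally I would transport the given factorization across these maps, using the composite homomorphism $\Psi=\Phi\circ\Theta^{-1}\colon B_3\to SL_2(\mathbb{Z})$, for which $\Psi(\sigma_1)=\Phi(\alpha)=\phi_A$ and $\Psi(\sigma_2)=\Phi(\beta^{-1})=\phi_B^{-1}$. Then for $\sigma=\omega_1^{\varepsilon_1}\cdots\omega_n^{\varepsilon_n}$ with $\omega_i\in\{\sigma_1,\sigma_2\}$ one gets, since $\phi=\Theta^{-1}(\sigma)$,
$$\phi_\sharp=\Phi(\phi)=\Psi(\sigma)=\Psi(\omega_1)^{\varepsilon_1}\cdots\Psi(\omega_n)^{\varepsilon_n}=\phi_1^{\varepsilon_1}\cdots\phi_n^{\varepsilon_n},\qquad \phi_i=\begin{cases}\phi_A & \text{if }\omega_i=\sigma_1,\\ \phi_B^{-1} & \text{if }\omega_i=\sigma_2,\end{cases}$$
which is the assertion.

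The one ingredient that is not bookkeeping is the statement that $\Theta$ is a well-defined homomorphism, indeed an isomorphism --- the Birman--Hilden-type fact that mapping classes of $(T,\partial T)$ and $3$-braids correspond bijectively and compatibly with composition via the hyperelliptic involution $\theta$; I expect pinning this down, together with checking that no orientation ambiguity creeps in, to be the main point. The latter is exactly why $\sigma_2$ contributes $\phi_B^{-1}$ and not $\phi_B$: in the conventions fixed above one has $\beta\leftrightarrow\sigma_2^{-1}$ but $\beta\leftrightarrow\phi_B$, so $\sigma_2\leftrightarrow\beta^{-1}\leftrightarrow\phi_B^{-1}$.
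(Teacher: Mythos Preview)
Your proposal is correct and follows the same approach as the paper. In fact, the paper gives no separate proof of this proposition: it is stated as a direct consequence of the preceding discussion, which sets up the two correspondences $\alpha\leftrightarrow\phi_A$, $\beta\leftrightarrow\phi_B$ on the homology side and $\alpha\leftrightarrow\sigma_1$, $\beta\leftrightarrow\sigma_2^{-1}$ on the braid side via the $\theta$-quotient $T\to D^2$. Your argument simply packages these two correspondences as homomorphisms $\Phi$ and $\Theta$ out of $\mathrm{MCG}(T,\partial T)$ and composes them, which is exactly the content of the paper's ``above observation''; your added care about $\Theta$ being a well-defined isomorphism (the Birman--Hilden point) and about why $\sigma_2$ contributes $\phi_B^{-1}$ rather than $\phi_B$ makes explicit what the paper leaves implicit.
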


\section{Integral surgeries along a closed orbit of Anosov flow}

In this section,
we will review some definitions and results related to Anosov flow,
then explain Proposition~\ref{maintheorem} induced from a theorem of Fenley.

Anosov flow in a closed 3-manifold is a nonsingular flow with two directions
that are transverse to the flow direction,
one is exponentially expanding and the other is exponentially contracting. 
The precise definition is as follows.

\begin{defin}\label{DefofAnosovFlow} 
Let $M$ be a closed manifold and $\Phi_t$ a nonsingular smooth flow in $M$.
The flow $\Phi_t$ is called an \textit{Anosov flow}
if there exists a continuous $D\Phi_t$ invariant splitting of the tangent bundle $TM=TX\oplus E^s \oplus E^u$
such that there are constants $C \ge 1$ and $0<\lambda<1$ which satisfy the following:
\begin{enumerate}
\item $TX$ is one dimensional and tangent to $\Phi_t$,
\item $||D\Phi_t (v)|| \le C \lambda^t ||v||$ for any $v\in E^s$ and $t>0$,
\item $||D\Phi_{-t} (v) || \le C \lambda^t ||v||$ for any $v\in E^u$ and $t>0$.
\end{enumerate}
\end{defin}

An Anosov flow $\Phi_t$ defines
the plane fields $TX\oplus E^s$ and $TX\oplus E^u$,
and they are uniquely integrable.
Then these plane fields produce two foliations $\mathcal{F}^s$ and $\mathcal{F}^u$ respectively,
which are called (\textit{weak}) \textit{stable} and (\textit{weak}) \textit{unstable foliations} (see \cite{An69Tr}).
An Anosov flow $\Phi_t$ is said to be \textit{orientable}
if both $\mathcal{F}^s$ and $\mathcal{F}^u$ are transversely orientable,
\textit{$\mathbb{R}$-covered}
if both $\mathcal{F}^s$ and $\mathcal{F}^u$ are $\mathbb{R}$-covered.
Note that these foliations are Reebless
since the leaves of $\mathcal{F}^s$ and $\mathcal{F}^u$ are either
planes, open annuli or open M\"{o}bius bands.

There are two typical examples of Anosov flow, one is a geodesic flow on a unit tangent bundle
over a surface of negative constant curvature \cite{An69Tr}, the other is a suspension flow of Anosov diffeomorphism.
In this paper we mainly focus on Anosov flows generated by a suspension flow,
so we will review it shortly. 

A diffeomorphism $\phi:S\to S$ of a torus $S$ is called \textit{Anosov diffeomorphism}
if there exists a continuous splitting of the tangent bundle $TS=E^s\oplus E^u$
such that $E^s$ and $E^u$ are invariant by the derivative $D\phi$
and satisfy the conditions (2) and (3) of Definition \ref{DefofAnosovFlow}
for some constants $C$ and $\lambda$.
The suspension of a diffeomorphism $\phi$ is the flow $\Phi_t^\phi:\widehat{M}_\phi\to \widehat{M}_\phi$
on the mapping torus $\widehat{M}_\phi=S\times [0,1]/(x,1)\sim(\phi(x),0)$
given locally by $\Phi_t^\phi(x,s)=(x,s+t)$.
If the diffeomorphism $\phi$ is Anosov then the suspension $\Phi_t^\phi$ is an Anosov flow.

Several techniques are producing a new Anosov flow,
some of these are Fried's blowing-up technique ~\cite{Fri83},
Handel-Thurston's surgery construction ~\cite{HT80} and Goodman's surgery ~\cite{Goo83}.
We will review Fried's blowing-up technique as follows.

Let $\Phi_t$ be an Anosov flow on a closed 3-manifold $M$, $\gamma$ be a closed orbit of $\Phi_t$.
We assume that
the leaves $W^s(\gamma)$ and $W^u(\gamma)$
of the stable/unstable foliation of $\Phi_t$ containing $\gamma$
are both homeomorphic to an open annuls.
Let $M^*$ be the manifold obtained by blowing-up of $M$ along $\gamma$,
that is, 
for every $x\in \gamma$
replace $x$ to the circle of normal sphere
$\left(T_x M/ T_x \gamma \setminus \{0\}\right) / \mathbb{R}^+ \cong S^1$.
The interior of $M^*$ is homeomorphic to $M\setminus \gamma$
and the boundary $\partial M^*$ is the torus $T_\gamma$.
There is the induced flow $\Phi_t^*$ by blowing-up.
The flow $\Phi_t^*$ has four closed orbits on $T_\gamma$,
two of them are attractive, the other two are repelling.

We take the longitude on $T_\gamma$ as one of the closed orbits of $\Phi_t^*$
and meridian as a normal sphere.
Let $\alpha$ be a simple closed curve on $T_\gamma$
which is transverse to $\Phi_t^*$ and intersects the closed orbits four times.
We can see the slope of $\alpha$ is $1/n$ for some $n\in \mathbb{Z}$.
Note that the curve $\alpha$ is not unique,
and its slope can be chosen to realize any slope $1/n$ for $n\in\mathbb{Z}$.
We take the circle foliation $\mathcal{C}_\alpha$ on $T_\gamma$
which is transverse to $\Phi_t^*$ and parallel to $\alpha$.
Then we define the blowing-down $M^\alpha$ of $M^*$
by identifying every circle leaf of $\mathcal{C}_\alpha$ to a point.
The manifold $M^\alpha$ is the same as the manifold obtained by
$1/n$-Dehn surgery on the closed orbit $\gamma$.
The induced flow $\Phi_t^\alpha$ on $M^\alpha$ becomes an Anosov flow~\cite{Fri83}.

Applying the surgery technique to closed orbits of an Anosov flow,
Fenley showed the following theorem.

\begin{theorem}\label{theoremoffenley}{\rm (Fenley\,\cite[Theorem 2.7]{Fen94})}
Let $\Phi_t$ be an oriented Anosov flow in a closed 3-manifold $M$,
and $\gamma$ be a closed orbit of $\Phi_t$.
If $\Phi_t$ is a suspension Anosov flow,
the induced Anosov flow $\Phi_t^\alpha$ in $M^\alpha$ is $\mathbb{R}$-covered,
where $\Phi_t^\alpha$ and $M^\alpha$ are obtained by $1/n$-Dehn surgery on $\gamma$
as above.
\end{theorem}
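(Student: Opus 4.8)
The plan is to show that both weak foliations $\mathcal F^s_\alpha$ and $\mathcal F^u_\alpha$ of $\Phi^\alpha_t$ have leaf space homeomorphic to $\mathbb R$. It is enough to treat $\mathcal F^s_\alpha$: the time-reversal of a suspension Anosov flow is again an oriented suspension Anosov flow, Fried's transverse curve of slope $1/n$ on $T_\gamma$ may be chosen independently of the time direction, so the surgered flow built from the reversed suspension is exactly the time-reversal of $\Phi^\alpha_t$, and the $\mathcal F^u_\alpha$-statement becomes the $\mathcal F^s$-statement for that flow. (Alternatively one invokes the theorem of Barbot and Fenley that, for an Anosov flow on a closed $3$-manifold, the stable foliation is $\mathbb R$-covered if and only if the unstable one is.) I would then pass to orbit spaces: for any Anosov flow on a closed orientable $3$-manifold the orbit space $\mathcal O=\widetilde M/\widetilde\Phi$ is homeomorphic to $\mathbb R^2$ and carries two transverse topological foliations $\mathcal O^s,\mathcal O^u$ induced by the weak stable and unstable foliations, and the leaf space of $\widetilde{\mathcal F^s}$ is naturally identified with the leaf space of $\mathcal O^s$. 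So the goal reduces to proving that $\mathcal O^s_\alpha$ has no pair of non-separated leaves in $\mathcal O_\alpha\cong\mathbb R^2$.

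For the suspension $\Phi_t$ itself, $\widetilde M\cong\mathbb R^3$, the orbit space $\mathcal O\cong\mathbb R^2$ is the universal cover of the torus fiber, and $\mathcal O^s,\mathcal O^u$ are the linear foliations by the lines parallel to the contracting and expanding eigendirections $v^s,v^u$ of the monodromy matrix $A\in SL_2(\mathbb Z)$; orientability of $\Phi_t$ forces $\mathrm{tr}(A)>2$, hence positive eigenvalues, hence the full stable and unstable leaves $W^s(\gamma),W^u(\gamma)$ through the chosen closed orbit are embedded annuli (not M\"{o}bius bands) as Fried's construction requires, and $(\mathcal O,\mathcal O^s,\mathcal O^u)$ is a genuine product bi-foliated plane with both leaf spaces already equal to $\mathbb R$. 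The closed orbit $\gamma$ determines a discrete, $\pi_1(M)$-invariant subset $\widehat\gamma\subset\mathcal O$ — a translate of the integer lattice when $\gamma$ is the suspension of a fixed point — and since $v^s,v^u$ have irrational slope, $\widehat\gamma$ meets each leaf of $\mathcal O^s$ and each leaf of $\mathcal O^u$ in at most one point. This coherent placement of the ``crosses'' $\mathcal O^s(p)\cup\mathcal O^u(p)$, $p\in\widehat\gamma$, is precisely the feature of the suspension that I expect the argument to rely on.

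The technical heart is to describe $(\mathcal O_\alpha,\mathcal O^s_\alpha,\mathcal O^u_\alpha)$ explicitly in terms of $(\mathcal O,\mathcal O^s,\mathcal O^u)$ and the surgery. Because Fried's construction leaves $\Phi_t$ unchanged outside $N(\gamma)$, passing to universal covers shows $\widetilde{M^\alpha}$ and $\widetilde M$ agree away from the lifts of $N(\gamma)$, and the plan is to prove that $\mathcal O_\alpha$ is obtained from $\mathbb R^2$ by a $\pi_1(M)$-equivariant cut-and-reglue: at each $p\in\widehat\gamma$ one deletes the cross $\mathcal O^s(p)\cup\mathcal O^u(p)$ and reglues the four quadrant pieces after inserting a product band whose width records the integer $n$, this being the effect on the bi-foliated plane of blowing $\gamma$ up to the torus $T_\gamma$ and then blowing down along the transverse circle foliation $\mathcal C_\alpha$ of slope $1/n$. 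Making this recipe precise — tracking where the four periodic orbits on $T_\gamma$ go and how the circles of $\mathcal C_\alpha$ collapse — is where I expect the main difficulty to be, and it is exactly the step where the hypothesis that $\Phi_t$ is a \emph{suspension} is essential: for a general oriented Anosov flow Fried surgeries on closed orbits routinely yield non-$\mathbb R$-covered flows, so any argument must use the product structure of the suspension's orbit space. Equivalently, one may formulate this step as exhibiting the new bi-foliated plane as being of skew type in the sense of Barbot and Fenley, i.e.\ constructing directly the order-preserving homeomorphism between the leaf spaces of $\mathcal O^s_\alpha$ and $\mathcal O^u_\alpha$.

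Granting this model, Hausdorffness is a combinatorial check. A hypothetical pair of distinct non-separated leaves of $\mathcal O^s_\alpha$, traced back through the regluings, would have to come either from non-separated leaves of the linear foliation $\mathcal O^s$ of $\mathbb R^2$, which is impossible, or from a non-separation produced by the regluing; but every cross is reglued with the same twist $n$ and the crosses are coherently ordered, so the identifications induced on the leaf space of $\mathcal O^s$ are monotone, and the leaf space of $\mathcal O^s_\alpha$ is simply $\mathbb R$ with intervals inserted monotonically — hence again homeomorphic to $\mathbb R$, cocompactness of the $\pi_1(M^\alpha)$-action ruling out a half-line or a bounded interval. By the reduction in the first paragraph the same conclusion holds for $\mathcal F^u_\alpha$, so $\Phi^\alpha_t$ is $\mathbb R$-covered. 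In summary, the serious work is concentrated in two linked points: pinning down the surgered bi-foliated plane exactly, and verifying that the coherence inherited from the suspension leaves no room for branching.
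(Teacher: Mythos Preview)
Your outline shares the overall architecture of the paper's sketch (which follows Fenley's original argument): pass to the orbit space $\mathcal O\cong\mathbb R^2$ of the lifted flow, show the stable leaf space is $\mathbb R$, and then reverse the flow direction to obtain the unstable statement. Where you diverge is in the technical core. The paper's sketch emphasizes that Fried surgery leaves the lifted stable leaves themselves unchanged and only alters how the unstable leaves reconnect across the stable leaf through $\tilde\gamma$; Fenley then uses the dilatation constant $\lambda$ of the toral Anosov monodromy to establish the ``positive maximal'' and ``negative maximal'' properties for $\tilde{\mathcal F}^s$ of the surgered flow, and these properties directly rule out branching of the leaf space. Your plan instead proposes a symmetric cut-and-reglue model of the bifoliated plane along the crosses $\mathcal O^s(p)\cup\mathcal O^u(p)$, $p\in\widehat\gamma$, and a monotonicity check on the induced identifications (equivalently, verifying the surgered plane is of skew type in the Barbot--Fenley sense). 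Both routes are viable and ultimately exploit the same special feature of the suspension---that each linear stable or unstable leaf meets $\widehat\gamma$ in at most one point, so the crosses are coherently ordered---but Fenley's maximal-property argument is quantitative and bypasses the explicit surgical bookkeeping you rightly flag as the hard step, whereas your approach sits closer to the modern bifoliated-plane viewpoint and makes the geometric effect of the surgery more visible.
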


We will explain a rough sketch of the proof of Theorem~\ref{theoremoffenley} as follows.
Let $\Phi_t$ be an Anosov flow in a closed 3-manifold $M$.
The orbit space $\mathcal{O}$ of the lifted flow $\tilde{\Phi}_t$ in the universal cover $\tilde{M}$
is homeomorphic to $\mathbb{R}^2$.
Dehn surgery on the closed orbit $\gamma$ in the sense of Fried's surgery
does not change the leaves of the lifted stable foliation $\tilde{\mathcal{F}}^s$,
but changes the manner of connection between the leaves of $\tilde{\mathcal{F}}^u$
which across the leaf of $\tilde{\mathcal{F}}^s$ containing the lift $\tilde{\gamma}$ of the closed orbit $\gamma$.
If $\Phi_t$ is a suspension Anosov flow with an Anosov diffeomorphism $f:T^2\to T^2$,
we can see in the orbit space $\mathcal{O}$ that
the lifted stable foliation $\tilde{\mathcal{F}}^s$ of the resultant Anosov flow $\Phi_t^\alpha$
has the ``positive maximal'' and ``negative maximal'' properties
by using the dilatation constant $\lambda$ of $f$.
The positive and negative maximal properties imply that the leaf space of $\tilde{\mathcal{F}}^s$
has no branching,
then the leaf space of $\mathcal{F}^s$ is homeomorphic to $\mathbb{R}$.
We can see that the leaf space of $\mathcal{F}^u$ is also homeomorphic to $\mathbb{R}$
by reversing the flow direction,
so $\Phi_t^\alpha$ is shown to be $\mathbb{R}$-covered.

Applying Theorem~\ref{theoremoffenley} to a manifold $M$ which contains a GOF-knot $K$
with a hyperbolic monodromy,
we obtain the following.

\begin{prop}\label{maintheorem}
Let $M$ be a closed, orientable 3-manifold and
$K$ be a genus one fibered knot in $M$.
If the monodromy matrix $\phi_\sharp$ of $K$
satisfies $\mathrm{Trace} (\phi_\sharp)>2$,
then the fundamental group $\pi_1 (\Sigma_K(n))$ is left-orderable for any integer $n$,
where $\Sigma_K(n)$ is the closed manifold obtained by Dehn surgery on $K$ with slope $n$. 
\end{prop}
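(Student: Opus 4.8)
The plan is to exhibit every $\Sigma_K(n)$ as the result of a Fried blow-down surgery on a closed orbit of a \emph{suspension} Anosov flow, and then to apply Fenley's Theorem~\ref{theoremoffenley} together with Proposition~\ref{rcovleftorderable}.

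First I would set up the ambient torus bundle. Since $K$ is a genus one fibered knot, its exterior is a once punctured torus bundle $M_\phi$, the knot $K$ is the core of the Dehn filling $M=M_\phi(1/0)$ with meridian $m$, and in the meridian--longitude coordinates of Section 2 one has $\Sigma_K(n)=M_\phi(n[m]+[\ell])$. Because $K$ is a GOF-knot, $\phi$ is orientation preserving, so $\phi_\sharp\in SL_2(\mathbb Z)$; together with $\mathrm{Trace}(\phi_\sharp)>2$ this forces $\phi_\sharp$ to be a hyperbolic matrix with two positive real eigenvalues $\lambda>1>\lambda^{-1}>0$. Let $A=\phi_\sharp$, viewed as a linear Anosov diffeomorphism of $T^2$, let $N_A$ be its mapping torus, and let $\Phi_t$ be the suspension Anosov flow on $N_A$. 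Since both eigenvalues of $A$ are positive, $A$ preserves the orientation of each eigendirection, the stable and unstable line bundles are orientable over $N_A$, and $\Phi_t$ is an \emph{oriented} suspension Anosov flow. The matrix $A$ has a unique fixed point in $T^2$ (because $\det(A-I)=2-\mathrm{Trace}(A)\ne 0$), and its suspension $\gamma_0$ is a closed orbit of $\Phi_t$ whose weak stable and weak unstable leaves are open annuli, so Fried's construction applies to $\gamma_0$.

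Next I would match the surgeries. Removing an open tubular neighborhood of $\gamma_0$ from $N_A$ yields a once punctured torus bundle whose monodromy induces $A$ on homology, so by the conjugacy classification of punctured torus bundles (\cite{CJR}; note also that $\mathrm{Trace}(A)>2$ makes $M_\phi$ hyperbolic, so its fibration, hence its fibered framing, is essentially unique) there is a homeomorphism $M_\phi\cong N_A\setminus\mathrm{int}\,N(\gamma_0)$ carrying the fibered meridian--longitude pair $(m,\ell)$ to the fibered framing of $\partial N(\gamma_0)$. Under this identification the fiber of $N_A$ meets $N(\gamma_0)$ in a disk, so $[\ell]$ is the meridian of $\gamma_0$; and the four closed orbits of the blown-up flow $\Phi_t^{*}$ on $\partial N(\gamma_0)$, being sections of the disk bundle $N(\gamma_0)\to S^1$, are all isotopic to the section framing $[m]$. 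Hence $n[m]+[\ell]$ is precisely the slope-$1/n$ filling of $\gamma_0$ in Fried's sense, and $\Sigma_K(n)$ carries the Anosov flow $\Phi_t^{\alpha}$ obtained by blowing down along a transverse circle foliation of slope $1/n$.

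Finally, Theorem~\ref{theoremoffenley} gives that $\Phi_t^{\alpha}$ is $\mathbb R$-covered; since Fried's surgery is supported near $\gamma_0$ it keeps the flow oriented, so its weak stable foliation $\mathcal F^s$ is a transversely oriented, Reebless, $\mathbb R$-covered foliation on the closed orientable $3$-manifold $\Sigma_K(n)$, and Proposition~\ref{rcovleftorderable} yields that $\pi_1(\Sigma_K(n))$ is left-orderable. As $n\in\mathbb Z$ is arbitrary, this proves the proposition. The one genuinely non-formal input, beyond the cited theorems, is the bookkeeping in the previous paragraph: identifying $(m,\ell)$ with the flow framing of $\gamma_0$ so that integral surgery on $K$ corresponds to $1/n$-surgery on the closed orbit with \emph{no} shift (a shift by even a single meridian would ruin the correspondence for all $n\ne 0$), and this is the step where I expect to have to argue carefully from the definitions of $m$ and $\ell$ and the local structure of the flow near $\gamma_0$.
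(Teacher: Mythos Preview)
Your proof is correct and follows essentially the same route as the paper's: build the suspension Anosov flow on the closed torus bundle with monodromy $\phi_\sharp$, identify $M_\phi$ with the blow-up along the closed orbit through the origin so that the $(m,\ell)$-pair on $\partial M_\phi$ and the flow framing on $T_\gamma$ are interchanged, and then invoke Theorem~\ref{theoremoffenley} and Proposition~\ref{rcovleftorderable}. One harmless slip: $A$ need not have a \emph{unique} fixed point on $T^2$ (there are $|\det(A-I)|=\mathrm{Trace}(A)-2$ of them), but you only ever use the one at the origin, so the argument is unaffected.
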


\begin{proof}
Let $K$ be a GOF-knot in $M$ and $\phi\colon T\to T$ be the monodromy of $K$,
and fix a representative monodromy matrix $\phi_\sharp$ of $\phi$.
We take a homeomorphism $\psi\colon T^2 \to T^2$ of a torus
whose representative matrix $\psi_\sharp$ in $SL_2(\mathbb{Z})$
coincides with $\phi_\sharp$.
Since $\mathrm{Trace}(\psi_\sharp)=\mathrm{Trace}(\phi_\sharp)>2$ by the assumption,
there is the suspension Anosov flow $\Phi_t$ on the mapping torus $\widehat{M}_{\psi}$.

Let $\gamma$ be the closed orbit of $\Phi_t$
which corresponds to the origin $0\in T^2=\mathbb{R}^2/\mathbb{Z}^2$.
We take a simple closed curve $\alpha$ with slope $1/n$ for an integer $n$
on the boundary $T_\gamma$ of the blowing-up manifold $\widehat{M}_\psi^*$.
By the blowing-down with the slope of $\alpha$,
we obtain the new manifold $\widehat{M}_\psi^\alpha$,
and the $\mathbb{R}$-covered Anosov flow $\Phi_t^\alpha$ in $\widehat{M}_\psi^\alpha$
by Theorem~\ref{theoremoffenley}.

Let $\Sigma_K(n)$ be the closed manifold which is obtained by Dehn surgery on the GOF-knot $K$
with the slope $n$.
Since the homeomorphism class of $M_\phi$ is determined by the conjugacy class of $\phi_\sharp$ as noted before,
$M_\phi$ is homeomorphic to $\widehat{M}_\psi^*$.
The longitude of $K$ which is the boundary of a fiber $T$
is regarded as the meridian on the boundary $T_\gamma$ of $\widehat{M}_\psi^*$.
Then the meridian-longitude pairs on $\partial M_\phi$ and $\partial \widehat{M}_\psi^*$ are interchanged,
and the slope of $\alpha$ is regarded as the integral slope $n$ on $\partial M_\phi$.
Therefore we can see that $\Sigma_K(n)=M_\phi(n)$ is homeomorphic to $\widehat{M}_\psi^\alpha$
because the manifold obtained by Dehn filling is determined by only the filling slope.

Since $\widehat{M}_\psi^\alpha$ contains the $\mathbb{R}$-covered Anosov flow $\Phi_t^{\alpha}$,
the fundamental group $\pi_1(\widehat{M}_\psi^\alpha)$ is left-orderable
by Proposition~\ref{rcovleftorderable},
so $\pi_1(\Sigma_K(n))$ is also left-orderable.
\end{proof}

\begin{rem}\label{commentshu}
Proposition \ref{maintheorem} has also been proved by Ying Hu using a different method \cite[Corollary 1.8; Remark 1.9]{Hu1}.
\end{rem}

\section{integral surgeries on genus one fibered knots in lens spaces}

In this section,
we apply Proposition~\ref{maintheorem} to GOF-knots in lens spaces.
It is known that the GOF-knots in $S^3$ are only the trefoil and figure-eight knot.
The figure-eight knot $K$ is a GOF-knot with the trace of its monodromy matrix is $3$ (see, \cite[Proposition 5.14]{BZ}).
Then we can apply Proposition~\ref{maintheorem} to the figure-eight knot $K$ and
conclude that for any integer $n\in\mathbb{Z}$ the manifold $\Sigma_K(n)$ has the left-orderable fundamental group.
Note that for the figure-eight knot $K$,
$\Sigma_K(r)$ is not an L-space for all non-trivial slopes $r$~\cite{OS2005Top}.
Then it is conjectured that $\pi_1(\Sigma_K(r))$ is left-orderable for all non-trivial slopes $r$.

In contrast with the case of $S^3$,
there are many GOF-knots in lens spaces.
Morimoto investigated the number of GOF-knots in some lens spaces~\cite{Mo89},
and Baker completely classified the number of GOF-knots in each lens space up to homeomorphisms as follows.

\begin{theorem}\label{theoremofbaker}{\rm (Baker\,\cite[Theorem 4.3]{Ba2014})}
Up to homeomorphisms,
the lens space $L(\alpha', \beta')$ contains exactly
\begin{enumerate}
\item three distinct GOF-knots if and only if $L(\alpha', \beta')\cong L(4,1)$,
\item two distinct GOF-knots if and only if $L(\alpha', \beta')\cong L(\alpha, 1)$ for $\alpha>0$ and $\alpha\neq 4$,
\item one distinct GOF-knot if and only if $L(\alpha', \beta')\cong L(\alpha, \beta)$ either
 \begin{enumerate}
 \item[(3-1)] for $\alpha=0$ or,
 \item[(3-2)] for $0<\beta<\alpha$, where either
  \begin{enumerate}
  \item[(3-2a)] $\alpha=2pq+p+q$ and $\beta=2q+1$ for some integers $p,q>1$, or
  \item[(3-2b)] $\alpha=2pq+p+q+1$ and $\beta=2q+1$ for some integers $p,q>0$, and
  \end{enumerate}
 \end{enumerate}
\item zero GOF-knots otherwise.
\end{enumerate}
\end{theorem}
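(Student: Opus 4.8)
The plan is to prove the classification by exploiting the double branched cover description set up in Section~2. By the construction preceding Proposition~\ref{monodromybraid}, the closed manifold $\bar M_\phi$ carrying the GOF-knot $K$ is the double branched cover of $S^3$ along the closed $3$-braid $L=\widehat\sigma$, where $\sigma\in B_3$ is the braid associated to the monodromy $\phi$ by Proposition~\ref{monodromybraid}, and $K$ is the lift of the braid axis of $L$. The first step is to decide when $\bar M_\phi$ is a lens space. Here I would invoke the classical fact (going back to Schubert, and, for the converse, to the work of Hodgson and Rubinstein on involutions of lens spaces) that the double branched cover of $S^3$ over a link is a lens space if and only if the branch link is a $2$-bridge link, the cover over the $2$-bridge link $\mathfrak b(\alpha,\beta)$ being $L(\alpha,\beta)$, together with the degenerate identifications $S^3=L(1,0)$ (unknot) and $S^2\times S^1=L(0,1)$ (two-component unlink). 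This reduces the whole problem to three tasks: (i)~determine which closed $3$-braids $\widehat\sigma$ are $2$-bridge links; (ii)~read off the resulting $(\alpha,\beta)$; and (iii)~count the pairs $(L(\alpha,\beta),K)$ up to homeomorphism.

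Second, I would classify the braids. Writing $\sigma=\omega_1^{\varepsilon_1}\cdots\omega_n^{\varepsilon_n}$ as in Proposition~\ref{monodromybraid}, the closure $\widehat\sigma$ is $2$-bridge precisely when, after conjugation and Markov moves, the braid word takes a rational (plat/alternating) form; in that case the rational-tangle calculus expresses $\alpha/\beta$ as the continued fraction determined by the exponents $\varepsilon_i$. The expectation is that exactly the lens spaces in the theorem appear. The torus-link closures (e.g.\ $\widehat{\sigma_1^{\,\alpha}}=\mathfrak b(\alpha,1)$, the $(2,\alpha)$-torus link) account for the family $L(\alpha,1)$, while the genuinely two-generator rational $3$-braids account for the families $\alpha=2pq+p+q,\ \beta=2q+1$ and $\alpha=2pq+p+q+1,\ \beta=2q+1$; every remaining braid closes to a link of bridge number $\ge 3$ and so yields no lens space. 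Matching the continued-fraction values against $2pq+p+q$ and $2pq+p+q+1$ is a bounded computation and should present no essential difficulty.

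Third, and this is the heart of the matter, I would count the GOF-knots in each lens space. Two GOF-knots are equivalent iff there is a homeomorphism of pairs $(L(\alpha,\beta),K)$, which by the Birman--Hilden correspondence is the same as a symmetry of $(S^3,\widehat\sigma)$ carrying the lifted axis to the lifted axis; one must therefore combine the known symmetry groups of $2$-bridge links with the lens-space identification $L(\alpha,\beta)\cong L(\alpha,\beta')$ whenever $\beta'\equiv\pm\beta^{\pm1}\pmod\alpha$. The delicate bookkeeping is to show that a generic lens space in the admissible families carries a single GOF-knot, that $L(\alpha,1)$ carries two because two distinct admissible braid families present the same lens space through inequivalent axes (concretely, a reducible monodromy of trace $2$ and a hyperbolic one of trace $2+\alpha$), and that $L(4,1)$ acquires a third owing to the extra symmetry of $\mathfrak b(4,1)$ forcing an additional coincidence. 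I expect this counting---ruling out hidden equivalences among braid words and isolating exactly the exceptional coincidences at $L(\alpha,1)$ and $L(4,1)$---to be the main obstacle, and it is precisely the content of Baker's detailed analysis in~\cite{Ba2014}.
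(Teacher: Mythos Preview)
The paper does not contain a proof of this theorem at all: Theorem~\ref{theoremofbaker} is stated as a quotation of Baker's result \cite[Theorem~4.3]{Ba2014} and is used as a black box. The only place where any of the underlying mechanism is unpacked is the proof of Lemma~\ref{braidrep}, which extracts the explicit $3$-braid representatives from Baker's argument (invoking Hodgson--Rubinstein for the lens-space/two-bridge correspondence, Murasugi for braid index, and Baker's Lemmas~3.4 and~3.8 for the equivalence classes of axes), but that lemma presupposes Baker's classification rather than reproving it.

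Your outline is broadly in the spirit of Baker's actual proof, and you correctly locate the crux in step~(iii): the counting of inequivalent braid axes for each two-bridge link. A couple of points where your sketch would need sharpening if you really wanted to carry it out. First, the equivalence you want is not ``a symmetry of $(S^3,\widehat\sigma)$ carrying the lifted axis to the lifted axis'' but rather an equivalence of the pair (link, braid axis) in $S^3$; the axis lives downstairs, and Baker's analysis proceeds by classifying such axes directly (this is his Lemma~3.8, which in turn rests on Birman--Menasco's classification of links that are closed $3$-braids). Second, your example $\widehat{\sigma_1^{\,\alpha}}=\mathfrak b(\alpha,1)$ is not quite the right one: the closure of $\sigma_1^\alpha$ alone has a split unknotted component, and the actual $3$-braid representatives of $\mathfrak b(\alpha,1)$ are $\sigma_1^\alpha\sigma_2$ and $\sigma_1^\alpha\sigma_2^{-1}$, as in Lemma~\ref{braidrep}. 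Finally, the extra GOF-knot in $L(4,1)$ does not come from an additional symmetry of $\mathfrak b(4,1)$ but from the fact that reversing the orientation of one component yields $\mathfrak b(4,3)$, which admits its own $3$-braid representative $\sigma_1\sigma_2^2\sigma_1\sigma_2^{-1}$ with an inequivalent axis. These are exactly the details Baker works out, and since the present paper simply cites his theorem, no proof is expected here.
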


As seen in Section 2.2,
there is a one-to-one correspondence between
the homeomorphism classes of the pairs $(M, K)$ of a GOF-knot $K$ in a closed 3-manifold $M$
and the homeomorphism classes of the pairs $(L, A)$ of a closed 3-braid $L$ in $S^3$ with its braid axis $A$.
Following the classification in Theorem~\ref{theoremofbaker},
we can interpret each case to the word of 3-braid of $L$.

\begin{lemma}\label{braidrep}
In the classification of Theorem~\ref{theoremofbaker},
each GOF-knot corresponds to the 3-braid as follows:
\begin{enumerate}
\item three distinct GOF-knots in $L(4,1)$ correspond to $A_1:\sigma_1^4\sigma_2$, $A_2:\sigma_1^4\sigma_1^{-1}$ and $A_3:\sigma_1\sigma_2^2\sigma_1\sigma_2^{-1}$
\item two distinct GOF-knots in $L(\alpha, 1)$ correspond to $B_1:\sigma_1^\alpha\sigma_2$ and $B_2:\sigma_1^\alpha\sigma_2^{-1}$ for $\alpha>0$ and $\alpha\neq4$,
\item one GOF-knot
 \begin{enumerate}
 \item[(3-1)] in $L(0,1)$ corresponds to $C:\sigma_2$,
 \item[(3-2)] in $L(\alpha, \beta)$ corresponds to either
  \begin{enumerate}
  \item[(3-2a)] $D_1:\sigma_1^p \sigma_2^2 \sigma_1^q \sigma_2^{-1}$ for $\alpha=2pq+p+q, \beta=2q+1$ $(p,q>1)$, or
  \item[(3-2b)] $D_2:\sigma_1^p \sigma_2^2 \sigma_1^{-q-1} \sigma_2^{-1}$ for $\alpha=2pq+p+q+1, \beta=2q+1$ $(p,q>0)$
  \end{enumerate}
 \end{enumerate}
\end{enumerate}
{\rm(}where we labeled each GOF-knot as $A_1$ to $D_2$ as above{\rm)}.
\end{lemma}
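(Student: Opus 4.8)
The plan is to read off, case by case, the closed 3-braid attached to each GOF-knot in Baker's enumeration, using the one-to-one correspondence recalled at the start of this section together with Proposition~\ref{monodromybraid}. Concretely, by that correspondence a GOF-knot $(M,K)$ is the same datum as a closed 3-braid $L$ with braid axis $A$ (with $K$ the lift of $A$ under the double branched covering $M\to S^3$), and Proposition~\ref{monodromybraid} records how the braid word $\sigma\in B_3$ is recovered from the monodromy matrix $\phi_\sharp$, namely $\sigma_1\leftrightarrow\phi_A$ and $\sigma_2\leftrightarrow\phi_B^{-1}$. For each entry of Theorem~\ref{theoremofbaker} I would exhibit the braid word --- the models $A_1,\dots,D_2$ --- extracting it from Baker's explicit construction of these GOF-knots in \cite{Ba2014} (and from Morimoto's surgery descriptions in \cite{Mo89} for the cases treated there), and then check that the braid so assigned really produces the lens space with the stated parameters.

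That check is a double branched cover computation: one must confirm that the double branched cover of $S^3$ over the closure of each listed braid equals $L(\alpha,\beta)$ for the given $\alpha,\beta$. When the word contains a generator $\sigma_2^{\pm1}$ exactly once, up to conjugation --- the situation for $A_1$, $A_2$, $B_1$, $B_2$ and $C$ --- a single Markov destabilization reduces the closure to that of a 2-braid $\sigma_1^{\,k}$, i.e.\ to the $(2,k)$-torus link, or to the 2-component unlink when $k=0$; its double branched cover is $L(k,1)$, respectively $S^1\times S^2=L(0,1)$, which yields $L(4,1)$, $L(\alpha,1)$ and $L(0,1)$ as claimed. For the remaining words $D_1=\sigma_1^{p}\sigma_2^{2}\sigma_1^{q}\sigma_2^{-1}$, $D_2=\sigma_1^{p}\sigma_2^{2}\sigma_1^{-q-1}\sigma_2^{-1}$ and $A_3$, no such destabilization is available, so I would present the double branched cover by a surgery diagram --- via the rational-tangle and continued-fraction calculus for the associated Montesinos presentation, or by realizing it as surgery on a framed chain link obtained from the braid word --- and then collapse that diagram by Rolfsen twists and slam-dunks to $\pm1/n$-surgery on an unknot. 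The order of the first homology comes out as $2pq+p+q$ (resp.\ $2pq+p+q+1$), and the second parameter $\beta=2q+1$ is produced by the continued fraction determined by the exponent string $p,2,q$ (resp.\ $p,2,-q-1$) together with the trailing $-1$.

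Finally I would note that the number of braids assigned to each lens space agrees with the count in Theorem~\ref{theoremofbaker} --- three for $L(4,1)$, two for $L(\alpha,1)$ with $\alpha\neq4$, one otherwise --- so that, granting Baker's theorem, the models $A_1,\dots,D_2$ exhaust all GOF-knots; their pairwise inequivalence (i.e.\ distinctness of the corresponding conjugacy classes of $\phi_\sharp$ in $GL_2(\mathbb{Z})$, equivalently of the pairs $(L,A)$) likewise follows from Baker's enumeration. The main obstacle is the computation for $D_1$ and $D_2$: the continued-fraction bookkeeping must be carried precisely enough to pin down $\beta$ as exactly $2q+1$, not merely to identify the order $\alpha$, and one has to track orientation conventions carefully in order to distinguish $L(\alpha,\beta)$ from $L(\alpha,\alpha-\beta)$ and from its mirror image.
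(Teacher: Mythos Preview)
Your plan is sound, but it takes a more hands-on route than the paper. The paper's proof opens with the Hodgson--Rubinstein theorem: a lens space $L(\alpha,\beta)$ is a double branched cover of $S^3$ over a link $L$ if and only if $L$ is the two-bridge link $\mathfrak{b}(\alpha,\beta)$. This immediately reduces the lemma to listing the closed 3-braid representatives of each relevant two-bridge link, which is exactly the content of Baker's Theorem~4.2 and Lemmas~3.4, 3.8 in \cite{Ba2014} (together with Murasugi's braid-index result \cite{Mur}). The words $A_1,\dots,D_2$ are then read off directly from Baker's own proof; for $D_1$ and $D_2$ the parameter $\beta$ comes from the continued-fraction expansions $\beta/\alpha=[p,2,q]$ and $[p,2,-q-1]$, i.e.\ from the Conway notation of the two-bridge link, with no separate surgery computation needed.

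Your proposal to verify each double branched cover by Markov destabilization (for $A_1,A_2,B_1,B_2,C$) and by a Montesinos/surgery-diagram reduction (for $A_3,D_1,D_2$) is correct and would succeed, but it duplicates work already encapsulated in the two-bridge correspondence. In particular, the concern you flag at the end --- pinning down $\beta$ exactly and tracking orientation conventions through Rolfsen twists --- evaporates once the branch locus is recognized as $\mathfrak{b}(\alpha,\beta)$: the continued fraction determines $(\alpha,\beta)$ up to the usual lens-space equivalences, and Baker's Lemma~3.8 handles the inequivalence of axes. What your approach buys is self-containment (no appeal to Hodgson--Rubinstein or Murasugi), at the cost of a longer Kirby-calculus computation; what the paper's approach buys is brevity, by quoting the braid representatives straight from Baker's classification of two-bridge links as closed 3-braids.
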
 

\begin{proof}
By Hodgson and Rubuistein~\cite{HoRu85},
the lens space $L(\alpha, \beta)$ is the double branched cover of $S^3$ over a link $L$
if and only if $L$ is the two-bridge link $\mathfrak{b}(\alpha, \beta)$.
Baker classifies the equivalence classes of 3-braid representatives of two-bridge knots up to homeomorphism
in \cite[Theorem 4.2]{Ba2014}.
The classification in Theorem~\ref{theoremofbaker} corresponds to this classification of 3-braid representatives.
Along with the proof of \cite[Theorem 4.2]{Ba2014},
we extract these 3-braid representatives of each case as follows.

\noindent \underline{\textit{Case (1)}}
The oriented two-bridge link $\mathfrak{b}(4,1)$ has two closed 3-braid representatives
$\sigma_1^4 \sigma_2$ and $\sigma_1^4 \sigma_2^{-1}$ with braid axes $A_1$ and $A_2$ respectively.
By changing the orientation of one component of $\mathfrak{b}(4,1)$, we obtain $\mathfrak{b}(4,3)$.
$\mathfrak{b}(4,3)$ has the representative $\sigma_1\sigma_2^2\sigma_1\sigma_2^{-1}$ with axis $A_3$.
All these axes are not equivalent each other, then $A_1$, $A_2$, and $A_3$ correspond distinct three GOF-knots
in the lens space $L(4,1)$.

\noindent \underline{\textit{Case (2)}}
By Murasugi~\cite{Mur} (cf. \cite[Proposition 3.1]{Ba2014}),
for $0<\beta<\alpha$ and $\beta$ is odd,
every unoriented two-bridge link of braid index at most 2
is equivalent to $\mathfrak{b}(\alpha,1)$.
The link $\mathfrak{b}(\alpha,1)$ is the type $(2,\alpha)$ torus link.
If $\alpha\neq 4$, they have two inequivalent 3-braid representatives
$\sigma_1^\alpha \sigma_1$ and $\sigma_1^\alpha \sigma_2^{-1}$
(see \cite[Lemma 3.8]{Ba2014}).
If $\alpha=1$ and $\beta=1$, the two-bridge knot $\mathfrak{b}(1,1)$ is unknot.
The unknot has three 3-braid representatives, $\sigma_1\sigma_2$, $\sigma_1^{-1}\sigma_2^{-1}$ and $\sigma_1\sigma_2^{-1}$ (\cite[Lemma 3.4]{Ba2014}, and see also~\cite{BM93}).
However, the two braid axes of $\sigma_1\sigma_2$ and $\sigma_1^{-1}\sigma_2^{-1}$ are equivalent
by an orientation-reversing homeomorphism of $S^3$,
and the braid axis of $\sigma_1\sigma_2^{-1}$ is not equivalent to the other two axes  (\cite[Lemma 3.8]{Ba2014}).
Then $\mathfrak{b}(\alpha,1)$ has two 3-braid representatives
$\sigma_1^\alpha\sigma_2$ and
$\sigma_1^\alpha\sigma_2^{-1}$
whose axes are inequivalent each other
for $\alpha>0$ and $\alpha\neq4$.
Note that the double branched cover of $S^3$ over unknot is also homeomorphic to $S^3$.
The two 3-braid representatives $\sigma_1\sigma_2$ and $\sigma_1\sigma_2^{-1}$ correspond to
the trefoil and figure-eight knot respectively
in the double branched cover. 

\noindent \underline{\textit{Case (3-1)}}
If $\alpha=0$ and $\beta=1$, the two-bridge link $\mathfrak{b}(0,1)$ is the trivial link of two components.
In this case,
$\mathfrak{b}(0,1)$ has just one equivalence class of braid axes representing it as a closed 3-braid
(\cite[Lemma 3.8]{Ba2014}),
and its representative is $\sigma_2$.
Note that the double branched cover over $\mathfrak{b}(0,1)$ is homeomorphic to $S^2\times S^1$
which is recognized as the lens space $L(0,1)$.

\noindent \underline{\textit{Case (3-2a)} and \textit{Case (3-2b)}}
When $0<\beta<\alpha$ and $\beta$ is odd,
the braid index of an oriented two-bridge link $L=\mathfrak{b}(\alpha, \beta)$ is equal to 3
if and only if $\alpha$ and $\beta$ satisfy either condition (3-2a) or (3-2b)
of Lemma~\ref{braidrep}
\cite{Mur} (cf. \cite[Proposition 3.1]{Ba2014}).
These links have just one equivalence class of braid axes representing $L$ as a closed 3-braid
(\cite[Lemma 3.8]{Ba2014}).

If $\alpha=2pq+p+q$ and $\beta=2q+1$ ($p,q>1$),
$\beta/\alpha$ is equal to the continued fraction $[p,2,q]$.
Then the two-bridge knot $\mathfrak{b}(\alpha, \beta)$
has the Conway notation $(p,2,q)$
and its 3-braid representative is equivalent to $\sigma_1^p\sigma_2^2\sigma_1^q\sigma_2^{-1}$.

If $\alpha=2pq+p+q+1$ and $\beta=2q+1$ ($p,q>0$),
$\beta/\alpha$ is equal to the continued fraction $[p,2,-q-1]$.
Then $\mathfrak{b}({\alpha,\beta})$ has the Conway notation $(p,2,-q-1)$
and its representative is  $\sigma_1^p\sigma_2^2\sigma_1^{-q-1}\sigma_2^{-1}$
(see \cite[Figure 3]{Ba2014}).
\end{proof}

By Lemma~\ref{braidrep},
all 3-braid representatives which correspond to the GOF-knots in lens spaces are listed.
By Proposition~\ref{monodromybraid},
the monodromy matrix of a GOF-knot is calculated from its corresponding 3-braid representative.
Applying Proposition~\ref{maintheorem} to the list of monodromy matrices,
we can conclude the following.

\begin{theorem}\label{losurgeryinlensspace}
Let $K$ be a GOF-knot in a lens space $L(\alpha, \beta)$.
If the monodromy matrix $\phi_\sharp$ of $K$ is conjugate to one of the following types {\rm(1)} and {\rm(2)}
in $GL_2(\mathbb{Z})$,
then for any $n\in\mathbb{Z}$,
the fundamental group $\pi_1(\Sigma_K(n))$ of the closed manifold $\Sigma_K(n)$ obtained by $n$-surgery
on $K$
is left-orderable.
\begin{enumerate}
\item $L(\alpha,\beta)=L(\alpha,1)$ and $\phi_\sharp=\begin{pmatrix} 1+\alpha & \alpha \\ 1 & 1 \end{pmatrix}$ for $\alpha>0$, or
\item $L(\alpha,\beta)$ satisfies $\alpha=2pq+p+q+1$, $\beta=2q+1$ and \\ $\phi_\sharp=\begin{pmatrix} 2pq+p-q & 2pq+3p-q-1 \\ 2q+1 & 2q+3 \end{pmatrix}$ for integers $p,q>0$.
\end{enumerate}
\end{theorem}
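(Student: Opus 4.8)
The plan is to reduce everything to Proposition~\ref{maintheorem} by a trace computation. As phrased, the statement is in fact almost immediate: the trace is invariant under conjugation in $GL_2(\mathbb{Z})$, so if $\phi_\sharp$ is conjugate to a matrix of type~(1) then $\mathrm{Trace}(\phi_\sharp)=(1+\alpha)+1=2+\alpha>2$ because $\alpha>0$, and if it is conjugate to a matrix of type~(2) then $\mathrm{Trace}(\phi_\sharp)=(2pq+p-q)+(2q+3)=2pq+p+q+3>2$ because $p,q>0$. In either case Proposition~\ref{maintheorem} applies verbatim and yields that $\pi_1(\Sigma_K(n))$ is left-orderable for every $n\in\mathbb{Z}$.

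What I would then spell out is where these two matrices come from, so as to tie the theorem to Baker's classification. First I would fix the dictionary of Proposition~\ref{monodromybraid}: reading a braid word from left to right, each letter $\sigma_1$ contributes the factor $\phi_A=\bigl(\begin{smallmatrix}1&1\\0&1\end{smallmatrix}\bigr)$, each $\sigma_2$ contributes $\phi_B^{-1}=\bigl(\begin{smallmatrix}1&0\\-1&1\end{smallmatrix}\bigr)$, and inverse letters contribute the inverse factors. Multiplying out the words listed in Lemma~\ref{braidrep}, the word $B_2\colon\sigma_1^{\alpha}\sigma_2^{-1}$, which by Lemma~\ref{braidrep} lies over $L(\alpha,1)$, gives $\phi_A^{\alpha}\phi_B=\bigl(\begin{smallmatrix}1+\alpha&\alpha\\1&1\end{smallmatrix}\bigr)$, precisely type~(1) (with the class $A_2$ over $L(4,1)$ appearing as the $\alpha=4$ instance), while the word $D_2\colon\sigma_1^{p}\sigma_2^{2}\sigma_1^{-q-1}\sigma_2^{-1}$ gives $\phi_A^{p}\phi_B^{-2}\phi_A^{-q-1}\phi_B$, which a short matrix multiplication identifies with the type~(2) matrix over the lens space $L(2pq+p+q+1,\,2q+1)$. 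For completeness I would also record that the remaining knots of Lemma~\ref{braidrep} do not satisfy the hypothesis of Proposition~\ref{maintheorem}: $A_1$ and $B_1\colon\sigma_1^{\alpha}\sigma_2$ have trace $2-\alpha$, $A_3\colon\sigma_1\sigma_2^{2}\sigma_1\sigma_2^{-1}$ has trace $-2$, $C\colon\sigma_2$ has trace $2$, and $D_1\colon\sigma_1^{p}\sigma_2^{2}\sigma_1^{q}\sigma_2^{-1}$ has trace $2-p-q-2pq$, all of which are at most $2$.

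Since the substantive input is already in hand --- Baker's classification in Lemma~\ref{braidrep} together with the Anosov-flow argument behind Proposition~\ref{maintheorem} --- there is no real obstacle here: the proof is the finite and routine bookkeeping of converting the braid words to matrices and reading off their traces. The only points that call for a little care are the signs and the order of composition in the dictionary of Proposition~\ref{monodromybraid} (notably that $\sigma_2$ maps to $\phi_B^{-1}$, not to $\phi_B$), and noticing that the $L(4,1)$ class $A_2$ is not a separate case but the $\alpha=4$ specialization of type~(1).
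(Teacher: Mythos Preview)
Your proposal is correct and follows essentially the same approach as the paper: use the dictionary of Proposition~\ref{monodromybraid} to turn the braid words of Lemma~\ref{braidrep} into monodromy matrices, read off the traces, and apply Proposition~\ref{maintheorem} to the cases with trace greater than $2$ (namely $A_2$, $B_2$, $D_2$). The only organizational difference is that the paper presents all eight matrices and traces in a single table before singling out the ones with $\mathrm{Trace}(\phi_\sharp)>2$, whereas you first dispatch the theorem by the conjugacy-invariant trace check and only afterwards recover the provenance of the two matrices; both orderings are fine.
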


\begin{proof}
The lens space $L(\alpha, \beta)$ containing the GOF-knot $K$
is the double branched cover of $S^3$ over the link $L$
which is the closed 3-braid represented by $\sigma\in B_3=\langle \sigma_1, \sigma_2 \rangle$,
and $K$ is the lift of the braid axis of $\sigma$.
The monodromy matrix $\phi_\sharp$ of $K$ is decomposed to the powers of two matrices $\phi_A$ and $\phi_B$
and these matrices correspond to 3-braid representatives $\sigma_1$ and $\sigma_2^{-1}$
by Proposition~\ref{monodromybraid}.
Then we can obtain the monodromy matrix of $K$ by reading the words in $\sigma$
for each case in Lemma~\ref{braidrep}.
Table 1 is the result of calculations of these matricies and its trace of matrices.
\begin{table}[ht]\label{tableoftrace}
\caption{monodromy matrix of $K$ and its trace}
\begin{tabular}{c|c|c|c}
label & 3-braid rep. & monodoromy matrix $\phi_\sharp$ & $\mathrm{Trace}(\phi_\sharp)$ \\
\hline
$A_1$ & $\sigma_1^4\sigma_2$ & $\begin{pmatrix} -3 & 4 \\ -1 & 1 \end{pmatrix}$ & $-2$ \\
\hline
$A_2$ & $\sigma_1^4\sigma_2^{-1}$ & $\begin{pmatrix} 5 & 4 \\ 1 & 1 \end{pmatrix}$ & $6$ \\
\hline
$A_3$ & $\sigma_1\sigma_2^2\sigma_1\sigma_2^{-1}$ & $\begin{pmatrix} -1 & 0 \\ -3 & -1 \end{pmatrix}$ & $-2$ \\
\hline
$B_1$ & $\sigma_1^\alpha\sigma_2$ & $\begin{pmatrix} 1-\alpha & \alpha \\ -1 & 1 \end{pmatrix}$ & $2-\alpha$ \\
\hline
$B_2$ & $\sigma_1^\alpha\sigma_2^{-1}$ & $\begin{pmatrix} 1+\alpha & \alpha \\ 1 & 1 \end{pmatrix}$ & $2+\alpha$ \\
\hline
$C$ & $\sigma_2$ & $\begin{pmatrix} 1 & 0 \\ -1 & 1 \end{pmatrix}$ & $2$ \\
\hline
$D_1$ & $\sigma_1^p\sigma_2^2\sigma_1^q\sigma_2^{-1}$ & $\begin{pmatrix} -2pq-p+q+1 & -2pq+p+q \\ -2q-1 & -2q+1 \end{pmatrix}$ & $2-q-p(1+2q)$ \\
\hline
$D_2$ & $\sigma_1^p\sigma_2^2\sigma_1^{-q-1}\sigma_2^{-1}$ & $\begin{pmatrix} 2pq+p-q & 2pq+3p-q-1 \\ 2q+1 & 2q+3 \end{pmatrix}$ & $3+p+q+2pq$ \\
\end{tabular}
\end{table}

In Table 1,
the cases of $\mathrm{Trace}(\phi_\sharp)>2$ are $A_2$, $B_2$, and $D_2$,
where $\alpha>0$ in the case $B_2$, and $p,q>0$ in the case $D_2$.
By Proposition~\ref{maintheorem} we can conclude that $\pi_1(\Sigma_K(n))$ is left-orderable in each case.
\end{proof}

\begin{rem}
A codimension one foliation $\mathcal{F}$ in a $3$-manifold is \textit{taut}
if every leaf has a circle intersecting to it which is transverse to the leaves of $\mathcal{F}$ everywhere. 
The closed manifolds obtained in the conclusion of Theorem~\ref{losurgeryinlensspace}
admit transversely orientable stable and unstable foliations of Anosov flows.
These foliations are taut
because a foliation without torus or Klein bottle leaves is taut ~\cite{Goo75},
and all leaves of stable and unstable foliations are not compact, especially not a torus.
In \cite{OS2004GT},
Ozsv\'{a}th and Szab\'{o} showed that
L-space does not contain a transversely orientable taut foliation.
Therefore,
the closed $3$-manifolds obtained in the conclusion of Theorem~\ref{losurgeryinlensspace}
are not L-space, it means that these manifolds satisfy L-space conjecture.
\end{rem}

\begin{rem}\label{remofremaincase}
Among the classification of Theorem \ref{theoremofbaker},
the rest cases of Theorem~\ref{losurgeryinlensspace} are $A_1$, $A_3$, $B_1$, $C$, and $D_1$
which are labeled in Table 1.

In the cases $A_1$, $A_3$, and $C$, all these monodromies $\phi$ are reducible
since $|\mathrm{Trace}(\phi_\sharp)|=2$.
Then these mapping tori $\hat{M}(\phi)$ have a Nil geometry,
so they are Seifert fibered manifolds~\cite[Theorem 4.16]{Sco}.
In the cases $B_1$ with $\alpha=1, 2, 3$, all these monodoromies $\phi$ are periodic
since $\mathrm{Trace}(\phi_\sharp)=-1,0,1$.
Then these $\hat{M}(\phi)$ are also Seifert fibered manifolds.
In all the above cases,
$\Sigma_K(n)$ is a Seifert fibered manifold for any $n\in\mathbb{Z}$~\cite[Proposition 2]{He}.
We can check whether the fundamental group $\pi_1(\Sigma_K(n))$ is left-orderable
by investigating the Seifert invariant of $\Sigma_K(n)$
and using a criterion of Boyer, Rolfsen, and Wiest~\cite{BRW}.

In the cases $B_1$ with $\alpha>4$ and $D_1$,
the once punctured torus bundle $M(\phi)$ is hyperbolic
since $\mathrm{Trace}(\phi_\sharp)<-2$
by Thurston \cite{Th1998a}(see also \cite{Ot} and \cite{Gue}).
By a theorem of Roberts and Shareshian \cite[Corollary 1.5]{RS},
in these cases,
we can see the fundamental group $\pi_1(\Sigma_K(n))$ is not left-orderable
for any integers $n>0$.
But we cannot see the left-orderability of the fundamental groups for $n<0$ in this context.
\end{rem}

By the discussion in Remark~\ref{remofremaincase},
we obtain the following corollary by restricting the cases for hyperbolic
genus one fibered knots in lens spaces.

\begin{coro}\label{hyperboliccase}
Let $K$ be a hyperbolic, genus one fibered knot in lens space $L(\alpha, \beta)$.
The fundamental groups of the closed manifolds obtained by all integral surgeries on K are left-orderable
if and only if the monodromy matrix $\phi_\sharp$ of $K$
is conjugate to one of the types {\rm (1)} and {\rm (2)}
of Theorem~\ref{losurgeryinlensspace} in $GL_2(\mathbb{Z})$.
\end{coro}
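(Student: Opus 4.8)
The statement is an ``if and only if'', and the plan is to obtain each direction from results already assembled in the paper, adding only a short case analysis over Baker's classification.

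\textbf{The \emph{if} direction.} This is immediate from Theorem~\ref{losurgeryinlensspace}, which asserts exactly that whenever $\phi_\sharp$ is conjugate in $GL_2(\mathbb{Z})$ to a matrix of type (1) or (2), the group $\pi_1(\Sigma_K(n))$ is left-orderable for every $n\in\mathbb{Z}$. The one point worth recording is that the hypothesis ``$K$ hyperbolic'' is not only compatible with but forced by being of type (1) or (2): by the trace column of Table~1 a type (1) matrix has trace $2+\alpha>2$ and a type (2) matrix has trace $3+p+q+2pq>2$, so $|\mathrm{Trace}(\phi_\sharp)|>2$ and the once-punctured torus bundle $M_\phi$ is hyperbolic by Thurston, as recalled in Remark~\ref{remofremaincase}.

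\textbf{The \emph{only if} direction.} Here I would run through Baker's list. By Theorem~\ref{theoremofbaker} and Lemma~\ref{braidrep}, and using that a once-punctured torus bundle is hyperbolic precisely when $|\mathrm{Trace}(\phi_\sharp)|>2$, the trace column of Table~1 shows that the hyperbolic GOF-knots in lens spaces are exactly those with monodromy conjugate to one of $A_2$, $B_1$ with $\alpha>4$, $B_2$ with $\alpha>0$, $D_1$, $D_2$; the remaining cases $A_1$, $A_3$, $C$, and $B_1$ with $\alpha\in\{1,2,3\}$ have $|\mathrm{Trace}(\phi_\sharp)|\le2$, so $K$ is not hyperbolic and lies outside the hypothesis. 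Since trace is a conjugacy invariant and types (1) and (2) have trace $>2$, among this list only $A_2$ and $B_2$ (which are exactly type (1), $A_2$ being its $\alpha=4$ instance) and $D_2$ (which is exactly type (2)) can be conjugate to a type (1) or (2) matrix; the remaining hyperbolic families $B_1$ with $\alpha>4$ and $D_1$ have $\mathrm{Trace}(\phi_\sharp)<-2$ and hence cannot. So it remains only to check that these last two families fail the left-orderability hypothesis, which is exactly what Remark~\ref{remofremaincase} records: by the theorem of Roberts and Shareshian \cite[Corollary 1.5]{RS}, in the cases $B_1$ with $\alpha>4$ and $D_1$ the group $\pi_1(\Sigma_K(n))$ is not left-orderable for any integer $n>0$. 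Hence if $K$ is a hyperbolic GOF-knot in a lens space all of whose integral surgeries have left-orderable fundamental group, then $K$ is of neither type $B_1$ ($\alpha>4$) nor $D_1$, so $\phi_\sharp$ is conjugate to a matrix of type (1) or (2).

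The mathematical input is entirely borrowed: Thurston's hyperbolization for punctured-torus bundles and the Roberts--Shareshian non-left-orderability theorem are both black boxes already used in the paper, so I expect no genuine difficulty. The only step demanding care is the bookkeeping in the \emph{only if} direction --- correctly identifying $A_2$ with the $\alpha=4$ case of type (1), keeping track of the exclusion $\alpha\neq4$ in the $B_1$ and $B_2$ entries, and verifying against Table~1 that no hyperbolic case has been overlooked --- and that is therefore the point I would treat most attentively, routine though it is.
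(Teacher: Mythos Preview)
Your proposal is correct and follows exactly the route the paper takes: the corollary is stated immediately after Remark~\ref{remofremaincase} with the single sentence ``By the discussion in Remark~\ref{remofremaincase}, we obtain the following corollary by restricting the cases for hyperbolic genus one fibered knots in lens spaces,'' and your write-up simply makes that discussion explicit --- Theorem~\ref{losurgeryinlensspace} for the \emph{if} direction, and the Roberts--Shareshian result applied to the trace~$<-2$ families $B_1$ (with $\alpha>4$) and $D_1$ for the \emph{only if} direction. Your bookkeeping (identifying $A_2$ as the $\alpha=4$ instance of type~(1), excluding the non-hyperbolic cases via $|\mathrm{Trace}(\phi_\sharp)|\le 2$) is accurate.
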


\vspace{8pt}
\noindent\textbf{Acknowledgements:}
The authors would like to thank Ying Hu for informing us of the result of \cite{BH} in Remark 1.1, and her result of \cite{Hu1} in Remark 3.4.
The first author is partially supported by JSPS KAKENHI Grant Number 18K03287.
The second author is partially supported by JSPS KAKENHI Grant Number 19K03460.

\end{document}